\newcommand{\IR}{\ensuremath{\mathbb{R}}}
\newcommand{\IN}{\ensuremath{\mathbb{N}}}
\newcommand{\IZ}{\ensuremath{\mathbb{Z}}}
\newcommand{\IC}{\ensuremath{\mathbb{C}}}
\newcommand{\IP}{\ensuremath{\mathbb{P}}}
\newcommand{\IE}{\ensuremath{\mathbb{E}}}
\newcommand{\eps}{\varepsilon}
\newcommand{\wtF}{ F_0}
\renewcommand{\rho}{\varrho}
\newcommand{\norm}[1]{\left\Vert#1\right\Vert}
\newcommand{\set}[1]{\left\{#1\right\}}
\newcommand{\brackets}[1]{\left(#1\right)}
\newcommand{\scalar}[2]{\left\langle#1,#2\right\rangle}
\newcommand{\diag}{\mathop{\mathrm{diag}}}
\DeclareMathOperator{\rank}{rank}
\newtheorem{thm}{Theorem}
\newtheorem*{open}{Open Problem}
\theoremstyle{plain}
\newtheorem{lemma}[thm]{Lemma}
\newtheorem{prop}[thm]{Proposition}
\theoremstyle{definition}
\newtheorem{ex}{Example}
\newtheorem{rem}[thm]{Remark}
\title[Function values are enough -- Part II]{Function values are enough\\ 
		for $L_2$-approximation: \\ Part II}
\author{
David Krieg$^1$
\and
Mario Ullrich$^{1,2}$
}
\address{$^1$Institut f\"ur Analysis, 
Johannes Kepler Universit\"at Linz, Austria}
\address{$^2$Moscow Center for Fundamental and Applied Mathematics, 
Lomonosov Moscow State University, Moscow, Russia}
\email{
david.krieg@jku.at, 
mario.ullrich@jku.at}
\keywords{$L_2$-approximation, 
information-based complexity,
least squares, 
rate of convergence, 
random matrices, 
Kadison-Singer}
\subjclass[2010]{%
41A25, 
41A45, 
41A65, 
60B20, 
41A63. 
}
\date{\today}
\begin{document}

\begin{abstract}
In the first part we have shown that,
for $L_2$-approximation 
of functions from a separable Hilbert space
in the worst-case setting,
linear algorithms based on function values
are almost as powerful
as arbitrary linear algorithms 
if the linear widths are square-summable. 
That is, they achieve the same polynomial rate of convergence.
In this sequel, we prove a similar result 
for separable Banach spaces
and other classes of functions.
\end{abstract}

\maketitle


Let $F$ be a set of 
complex-valued functions on a set $D$
such that, for all $x\in D$, point evaluation
$$
 \delta_x\colon F \to \IC,\quad f\mapsto f(x)
$$
is continuous with respect to some metric $d_{F}$ on $F$.
We consider numerical approximation of functions from such classes, 
using only function values, and
measure the error in the space $L_2=L_2(D,\mathcal{A},\mu)$
of square-integrable functions with respect to an arbitrary 
measure $\mu$ such that $F$ is embedded into $L_2$.
We are interested in the \emph{$n$-th minimal worst-case error}
\begin{equation}
\label{def:en}
e_n(F,L_2) \,:=\, 
\inf_{\substack{x_1,\dots,x_n\in D\\ \varphi_1,\dots,\varphi_n\in L_2}}\, 
\sup_{f\in F}\, 
\Big\|f - \sum_{i=1}^n f(x_i)\, \varphi_i\Big\|_{L_2},
\end{equation}
which is the worst-case error of an optimal linear algorithm that  
uses at most $n$ function values.
These numbers are sometimes called \emph{sampling widths} of $F$. 
We want to compare the sampling widths 
with the \emph{linear widths} of $F$,
defined by
\begin{equation}
\label{def:an}
a_n(F,L_2) \,:=\,
\inf_{\substack{T\colon L_2 \to L_2\\ \rank(T) \,\le\, n}}\, 
\sup_{f\in F}\, 
\big\|f - Tf \big\|_{L_2}.
\end{equation}
This is
the worst-case error of an optimal linear algorithm that uses 
at most $n$ linear functionals as information.
In other words, we want to compare the power of function values
(also called standard information)
with the power of arbitrary linear information 
for $L_2$-approximation with linear algorithms.
\medskip

The numbers $e_n$ and $a_n$ are well studied for 
many particular classes of functions.
For an exposition of known results 
and history on these and related quantities,
we refer to the books 
\cite{NW08,NW10,NW12}, especially~\cite[Chapter~26~\&~29]{NW12}, 
as well as \cite{DTU16,Tem93,Tem18}
and references therein. 
Note that the linear widths coincide with the Kolmogorov widths 
in our setting.
Here, we want to relate $e_n$ and $a_n$ for general function classes~$F$.

\medskip

Clearly, we always have $a_n \le e_n$.
On the other hand, an example 
of Hinrichs, Novak and Vybiral from~\cite{HNV08} 
shows that it is not possible to give any general upper bound
for the sampling widths in terms of the linear widths
if the latter are not square-summable.
We therefore ask for such a relation in the case
that the linear widths are square-summable.
Let us formulate 
one particularly interesting open question.

\begin{open}
Is it true for any class $F$ and any measure $\mu$ as above
that there exists a constant $c \in \IN$ 
(possibly depending on $F$ and $\mu$) such that
\begin{equation}\label{OP}
e_{cn}(F,L_2) \,\le\, \sqrt{ \frac{\vphantom{1}c}{n} \sum_{k\ge n} a_k(F,L_2)^2}
\quad\text{ for all } n\in\IN \text{ ?} 
\end{equation}
\end{open}

Note that \eqref{OP} is of no use
if the linear widths are not square-summable, 
but would lead to a quite tight bound otherwise.
The relation \eqref{OP} is true
for all examples of sufficiently studied function classes $F$
that are known to the authors. 
On the other hand, general results are only known in the case that $F$
is the unit ball of a reproducing kernel Hilbert space.
Even in this case, the problem is open, but the gap is quite small already.
Namely, it was shown in \cite{KU19} that \eqref{OP} is true
with the index $cn$ replaced by $cn\log(n+1)$, 
showing that the polynomial order of approximation and sampling widths 
is the same if the linear widths are square-summable. 
This was then improved by Nagel/Sch\"afer/T.\,Ullrich in~\cite{NSU20},
who showed \eqref{OP} with an additional factor of $\sqrt{\log(n+1)}$ 
on the right hand side.
In all these papers, the constant $c$ is independent of $F$ and $\mu$.
The aim of this paper is to prove a similar result 
for general function classes $F$.
\medskip

Before we come to our result, let us mention that, 
recently, another beautiful 
upper bound for the sampling widths of general classes $F$ 
has been shown by Temlyakov in~\cite{Tem20}.
This will be discussed in Section~\ref{sec:Tem}.
\medskip

Our main result reads as follows.

\begin{thm}\label{thm:main}
 Let $(D,\mathcal A,\mu)$ be a measure space and let 
 $F$ be a separable metric space of complex-valued functions on $D$
 that is continuously embedded into $L_2(D,\mathcal A,\mu)$
 such that function evaluation is continuous on $F$.
 Assume that $(a_n(F, L_2))\in\ell_p$ for some $0<p<2$.
 There is a universal constant $c\in \IN$
 and a constant $c_p>0$, depending only on $p$, such that, for all $n\ge 2$, we have
 \[
 e_{cn}(F, L_2) \,\le\, 
 c_p\, \sqrt{\log n}\, \left(\frac1n \sum_{k\ge n} a_k(F, L_2)^p \right)^{1/p}.
 \]
\end{thm}

\medskip

 In particular, 
 the linear widths and the sampling widths 
 have the same polynomial order of convergence:
 If we assume that $a_n(F, L_2) \lesssim n^{-\alpha} \log^\beta (n+1)$
 for some $\alpha>1/2$ and $\beta \in \IR$, 
 then we obtain
\begin{equation}\label{eq:order}
  e_n(F, L_2) \,\lesssim\,  n^{-\alpha} \log^{\beta+1/2} (n+1).
\end{equation}
Here and in the following, the symbol $\lesssim$ means that the left hand side
is bounded by a constant multiple of the right hand side for all $n\in\IN$,
whereas the symbol $\asymp$ means that this relation holds in both directions.
We will also present a bound for the case 
$\alpha=1/2$ and $\beta<-3/2$ which is off by an additional log-factor, 
see Section~\ref{sec:limit-case}. 
It is still open if the result of Theorem~\ref{thm:main} also holds 
for $p=2$, as it does for Hilbert spaces, see~\cite{NSU20}.

\medskip

Our proof of Theorem~\ref{thm:main} is not constructive.
However, if we know the operators $T$ 
that achieve the infimum in \eqref{def:an},
possibly up to a multiplicative constant,
then we can provide an explicit weighted least squares estimator
that achieves the stated upper bound up to a further logarithmic factor with high probability,
see Theorem~\ref{thm:main3}.
Note that these operators are known for a huge variety
of smoothness spaces, e.g., on the $d$-dimensional torus, see~\cite{DTU16}.

\section{Discussion and examples}

In this section, we give some additional comments on our 
main result, together with a few illustrative examples 
and a comparison with existing results.

\subsection{The condition on $F$}

The natural condition appearing in the proof of Theorem~\ref{thm:main} 
is that $F$ is a countable subset of $L_2$, see Theorem~\ref{thm:main2} 
for a precise statement.
We then need some kind of continuity
in order to extend our result to uncountable sets $F$.
Here, we employ that $F$ is
a separable metric space with continuous function evaluation 
and continuous embedding in $L_2$.
These assumptions are satisfied, for example, if
\begin{itemize}
 \item $F$ is the unit ball of a separable 
 normed space on which function evaluation at each point is 
	a continuous functional, or
 \item the measure $\mu$ is finite and $F$ is a compact subset of 
 the space of bounded functions on $D$.
\end{itemize}
To see that the conditions of Theorem~\ref{thm:main} are matched,
we equip $F$ with the metric induced by the normed space in the first case
and with the supremum metric in the second case.
Note that
the theorem might also be applied 
if $F$ is the unit ball of a \emph{non-separable}
normed space, since we might have separability with respect 
to a weaker norm.
For instance, the unit ball $F$ of the non-separable Sobolev space 
$W_\infty^s(0,1)$ with smoothness $s\in\IN$ clearly satisfies
the conditions of Theorem~\ref{thm:main}
when equipped with the supremum metric.

\subsection{Temlyakov's $L_\infty$-bound}
\label{sec:Tem}

Let us shortly compare
our result with
the recent result of Temlyakov~\cite{Tem20}
(see also \cite{CM17} for a related result).
He proved 
that there are universal constants $c,C\in\IN$
such that for any compact domain $D\subset \IR^d$, any probability
measure $\mu$ on $D$, and any compact subset $F$ of
the space $\mathcal C(D)$ of continuous functions on $D$,
we have
\begin{equation}\label{eq:Tem}
 e_{cn}(F,L_2) \,\le\, C\, d_n(F,L_\infty).
\end{equation}
Here, $d_n(F,L_\infty)$ is the $n$th Kolmogorov width of $F$ in $L_\infty$.
\medskip

First, we observe that the assumptions of 
Theorem~\ref{thm:main} and \eqref{eq:Tem} are quite different.
The result
\eqref{eq:Tem}
does not require the square-summability 
of the linear widths. For example, it can be applied
for classes of functions with small mixed smoothness 
as considered in \cite{TU21a,TU21b},
where Theorem~\ref{thm:main} fails to be of use.
On the other hand, only Theorem~\ref{thm:main} 
can be applied for classes of
unbounded functions on unbounded domains, 
like
Hermite spaces on $\IR^d$
or spaces of functions with singularities. 
But also for
classes of bounded functions on compact domains,  
it is not possible to say that one result yields better estimates than the other.
This is illustrated by the following example, 
which was kindly provided to us by Erich Novak.

\begin{ex}\label{counterexample}
Let $\mu$ be the Lebesgue measure on $[0,1]$ and 
let $(\ell_i)_{i\in\IN}$ and $(h_i)_{i\in\IN}$ be decreasing 
zero-sequences with $\sum_{i=1}^\infty \ell_i=1$.
Let $b_i$ be the hat function with height one, supported
on the interval $I_i=[\sum_{k=1}^{i-1} \ell_k, \sum_{k=1}^{i} \ell_k]$
of length $\ell_i$. We consider 
\[
F=\Big\{ \sum_{i=1}^\infty \lambda_i b_i \,\big\vert\, |\lambda_i| \le h_i \text{ for all } i \Big\}.
\]
Note that $F$ is a compact subset of $\mathcal C([0,1])$
which follows from the theorem of Arzel\`a  and Ascoli.
For this example, one can compute that
\[
 e_n(F,L_2) \, = \, \left( \int_0^1 \Big( \sum_{i>n} h_i b_i(x) \Big)^2~{\rm d} x \right)^{1/2}
\,=\, \left( \frac13 \sum_{i>n} h_i^2 \ell_i \right)^{1/2}
\]
and that the Kolmogorov widths in the uniform norm are given by
\[
 d_n(F,L_\infty) \,=\, h_{n+1}. 
\]
By choosing $\ell_i=i^{-\alpha}/\sum_{k\in\IN} k^{-\alpha}$ with some $\alpha>1$
and $h_i=i^{-\beta}$ with some $\beta>0$,
we obtain that the sampling widths are of order $n^{-(\beta+(\alpha-1)/2)}$
while the Kolmogorov widths are of order $n^{-\beta}$.
If $\alpha$ is close to $1$ and $\beta<1/2$, then \cite{Tem20}
yields an almost optimal bound while our result yields nothing.
If $\alpha>2$ and $\beta$ is close to zero,
then our results yields an almost optimal bound, 
while \cite{Tem20} yields almost nothing.
\end{ex} 

In contrast to this example, it is quite 
remarkable that Theorem~\ref{thm:main} 
and~\eqref{eq:Tem} actually lead to the same 
upper bounds for many multivariate function classes 
of mixed smoothness as considered in~\cite{DTU16}.
We note that the optimal order of the numbers $d_n(F,L_\infty)$ 
is often not known, in contrast to the linear widths $a_n(F, L_2)$.

\subsection{Korobov classes} 
As a further example, let us consider the Korobov classes
\[
 E^r_d\,=\,\left\{ f\in L_1(\mathbb T^d) \ \big\vert\ |\hat f(\mathbf k)| \le \prod_{j=1}^d \max\{1, |k_j|\}^{-r} \text{ for all } \mathbf k \in \IZ^d \right\},
\]
for given $r>1$ and $d\in\IN$, see e.g.\ \cite[Section~3.3]{DTU16}.
Here, $\mathbb T^d$ is the $d$-dimensional torus and
$\hat f(\bf k)$ denotes the (classical) Fourier coefficient.
It is well known that the non-increasing rearrangement $(c_n)_{n\in\IN}$
of the sequence $(\prod_{j=1}^d \max\{1, |k_j|\}^{-r})_{\mathbf k \in \IZ^d}$
satisfies $c_n \asymp n^{-r}\, \log^{r(d-1)} (n+1)$.
This can be derived from~\cite{Bab60,Mit62}, 
see also \cite{Kri18} for a direct formulation.
We easily obtain
\[
 a_n(E^r_d,L_2) \,\le\, \Big( \sum_{k>n} c_k^2 \Big)^{1/2}  
\,\lesssim\, n^{-r+1/2}\, \log^{r(d-1)} (n+1)
\]
and it follows from Lemma~3.4.5 and Theorem~4.3.5 in~\cite{DTU16} that
this bound is optimal.
Moreover, 
$E^r_d$ is a bounded subset of 
the \emph{mixed smoothness} Sobolev space $\mathbf W_2^s(\mathbb T^d)$
for all $s< r-1/2$ and therefore a compact subset of $\mathcal C(\mathbb T^d)$.
Theorem~\ref{thm:main}
yields 
\begin{equation*}
 e_n(E^r_d,L_2) \,\lesssim\, n^{-r+1/2}\, \log^{r(d-1)+1/2} (n+1).
\end{equation*}
To the best of our knowledge, this bound is new.

\section{The result behind Theorem~\ref{thm:main}}

Our main result is based on the following apparently 
more general theorem. 

\begin{thm}\label{thm:main2}
Let $(D,\mathcal A,\mu)$ be a measure space and let 
$F_0$ 
be a countable 
set of functions 
in $L_2(D,\mathcal A,\mu)$.
Assume that $(a_n(F_0, L_2))\in\ell_p$ for some $0<p<2$.
Then there is a universal constant $c\in \IN$
and a constant $c_p>0$, depending only on $p$, such that,
for all $n\ge 2$, we have
 \[
 e_{cn}(F_0, L_2) 
 \,\le\, c_p\, \sqrt{\log n}\, \left(\frac1n \sum_{k\ge n} a_k(F_0, L_2)^p \right)^{1/p}.
 \]
\end{thm}

Before we prove this theorem, let us show 
how it implies Theorem~\ref{thm:main}.

\begin{proof}[Proof of Theorem~\ref{thm:main}]
Since $F$ is a separable metric space, 
it contains a countable dense subset $ F_0$.
Now, 
let $x_1,\dots,x_n\in D$ and $\varphi_1,\dots,\varphi_n\in L_2$ be arbitrary.
We obtain for every $f\in F$ and $g\in F_0$ that
\[
\Big\|f - \sum_{i=1}^n f(x_i) \varphi_i\Big\|_{L_2}
\,\le\, \Big\|f - g\Big\|_{L_2} 
\,+\, \Big\|g - \sum_{i=1}^n g(x_i) \varphi_i\Big\|_{L_2}
\,+\, \Big\|\sum_{i=1}^n \Bigl(f(x_i)-g(x_i)\Bigr) \varphi_i\Big\|_{L_2}.
\]
To bound 
the first and the last term, first note that 
$U_\delta(f)\cap F_0\neq\varnothing$ 
for every $\delta>0$, where 
\[
U_\delta(f):=\{g\in F\colon d_F(f,g)<\delta\}.
\]
The continuity of the embedding into $L_2$ and of function evaluation 
now implies that for any $\eps>0$ 
there is some $\delta>0$ such 
that $\|f-g\|_{L_2}<\eps$ and 
$|f(y_i)-g(y_i)|<\eps$ for all $i=1,\dots,n$ and all $g\in U_\delta(f)$. 
Therefore, for every $\eps>0$ and every $f\in F$, 
we have 
\[
\Big\|f - \sum_{i=1}^n f(x_i)\, \varphi_i\Big\|_{L_2}
\,<\, \eps \,+\, 
\sup_{g\in F_0}\, \Big\|g - \sum_{i=1}^n g(x_i) \varphi_i\Big\|_{L_2}
\,+\, \eps \sum_{i=1}^n \|\varphi_i\|_{L_2}.
\]
We obtain that
\[
\sup_{f\in F}\, \Big\|f - \sum_{i=1}^n f(x_i)\, \varphi_i\Big\|_{L_2}
\,=\, 
\sup_{g\in F_0}\, \Big\|g - \sum_{i=1}^n g(x_i)\, \varphi_i\Big\|_{L_2}
\]
for all $x_1,\hdots,x_n\in D$ and $\varphi_1,\hdots,\varphi_n\in L_2$.
Therefore, an error bound of an algorithm on $F_0$ 
carries over to $F$ and so, together with $a_k(F_0, L_2)\le a_k(F, L_2)$
Theorem~\ref{thm:main2} implies Theorem~\ref{thm:main}.
\end{proof}

We will now prove Theorem~\ref{thm:main2}
by proving an error bound for a specific algorithm on $F_0$.
Recall that we have just proven that the same algorithm works for 
the class~$F$ from Theorem~\ref{thm:main}
if we choose $F_0$ as a countable dense subset.

\section{Algorithm and Proof of Theorem~\ref{thm:main2}}

In this whole section, we work under the assumptions of 
Theorem~\ref{thm:main2}.
We start with the following simple observation.

\begin{lemma}\label{lem:basis}
There is an orthonormal system 
$\{b_k \colon k\in \IN \}$ 
in $L_2$ such that
the orthogonal projection $P_n$ onto the span 
$V_n={\rm span}\{b_1,\hdots,b_n\}$ satisfies
\begin{equation}
\label{eq:projections}
 \sup_{f\in F_0} \Vert f - P_n f \Vert_{L_2}  
\,\le\, 2\, a_{n/4}(F_0,L_2),
 \qquad n\in \IN.
\end{equation}
\end{lemma}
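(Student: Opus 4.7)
The plan is to leverage the best-approximation property of Hilbert-space projections: for any finite-dimensional subspace $W$ of $L_2$, the orthogonal projection onto $W$ yields the smallest $L_2$-error among all elements of $W$. Consequently, given any rank-$n$ operator $T$, the orthogonal projection onto $\mathrm{range}(T)$ approximates at least as well, and so the numbers $a_n$ are essentially witnessed by orthogonal projections onto $n$-dimensional subspaces. What remains is to arrange a nested family of such subspaces, so that a single orthonormal basis produces all of them as initial spans.

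First, for each $k\in\IN$ I would fix a subspace $W_k\subseteq L_2$ with $\dim W_k\le k$ such that
\[
 \sup_{f\in F_0}\,\|f-Q_k f\|_{L_2}\,\le\,2\,a_k(F_0,L_2),
\]
where $Q_k$ is the orthogonal projection onto $W_k$. This is possible because, by the definition of $a_k$, there is an operator $T$ of rank at most $k$ with $\sup_{f\in F_0}\|f-Tf\|_{L_2}\le 2\,a_k(F_0,L_2)$ (or $a_k+\eps$ with $\eps\to 0$ should the infimum not be attained); taking $W_k=\mathrm{range}(T)$ works since $Q_k f$ is the closest element of $W_k$ to $f$.

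Second, I would form a nested chain by dyadic aggregation,
\[
 U_j\,:=\,W_1+W_2+W_4+\dots+W_{2^j},\qquad j\ge 0,
\]
which gives $U_0\subseteq U_1\subseteq\dots$, $\dim U_j\le 2^{j+1}-1$, and, because $W_{2^j}\subseteq U_j$,
\[
 \sup_{f\in F_0}\,\|f-P_{U_j}f\|_{L_2}\,\le\,2\,a_{2^j}(F_0,L_2).
\]
Applying Gram-Schmidt successively, extending an orthonormal basis of $U_j$ to one of $U_{j+1}$ at each step, then yields an orthonormal system $(b_k)_{k\in\IN}$ such that each $U_j$ is the span of an initial segment $\{b_1,\dots,b_{N_j}\}$ with $N_j\le 2^{j+1}-1$.

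Finally, for arbitrary $n\in\IN$ I would pick the largest $j\ge 0$ with $2^{j+1}-1\le n$ (setting $U_{-1}=\{0\}$ if no such $j$ exists, which handles small $n$ via the $L_2$-boundedness of $F_0$ implied by $a_1(F_0,L_2)<\infty$). Then $V_n\supseteq U_j$, whence
\[
 \sup_{f\in F_0}\|f-P_n f\|_{L_2}\,\le\,\sup_{f\in F_0}\|f-P_{U_j}f\|_{L_2}\,\le\,2\,a_{2^j}(F_0,L_2),
\]
and maximality of $j$ gives $n<2^{j+2}$, i.e.\ $2^j>n/4$, so monotonicity of $k\mapsto a_k(F_0,L_2)$ delivers $2\,a_{2^j}\le 2\,a_{\lceil n/4\rceil}$, which is the claimed estimate. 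I do not anticipate a serious obstacle: the argument is essentially bookkeeping with dyadic indices, and the only mildly subtle point is the $\eps$-perturbation when the infimum defining $a_k$ is not attained.
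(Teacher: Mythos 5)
Your argument is essentially the paper's proof: both build nested subspaces by dyadic aggregation of near-optimal $n$-dimensional subspaces $W_{2^k}$ (obtained from near-minimizers in the definition of $a_n$), observe that the orthogonal projection onto a larger subspace does at least as well as the original operator, and extract an orthonormal system whose initial spans contain these dyadic unions, yielding the factor $n/4$ in the index. The bookkeeping and the $\eps$-perturbation remark are handled the same way (the paper bakes this into the factor $2$), so the two proofs coincide in all essentials.
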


Note that the definition of $a_n$ in \eqref{def:an} 
makes perfect sense for $n\notin\IN$.

\begin{proof}
Clearly it is enough to find an increasing sequence of 
subspaces of $L_2$,
 \[
  U_1 \subseteq U_2 \subseteq U_3 \subseteq \hdots,
  \qquad
  \dim(U_n) \le n,
 \]
 such that the projection $P_n$ onto $U_n$ satisfies \eqref{eq:projections}.
 By the definition of $a_m$, $m\in\IN$, 
 there is a subspace $W_m\subset L_2$ of dimension~$m$
and a linear operator $T_m\colon L_2 \to W_m$
 such that 
 \[
  \sup_{f\in F_0} \Vert f - T_m f \Vert_{L_2} \,\le\,  2\, a_m(F_0,L_2).
 \]
 We let $U_n$ be the space that is spanned by the union of the 
 spaces $W_{2^k}$ over all $k\in \IN_0$ with $2^k\le n/2$.  
 Note that $U_n$ contains a subspace 
 $W_m$ with $m\ge n/4$.
 Therefore, 
 $P_n f$ is at least as close to $f$ as $T_m f$ for some $m\ge n/4$,
 which implies~\eqref{eq:projections}.
\end{proof}

\smallskip

In what follows 
$\{b_k \colon k\in \IN \}$ will always be
the orthonormal system 
from Lemma~\ref{lem:basis}. 
Note that we will consider $b_k$ as a function,
where we take an arbitrary representer from the equivalence class in $L_2$.
We will denote
\[
 \varepsilon_n \,:=\, \sup_{f\in F_0} \Vert f- P_n f \Vert_{L_2}
\]
to ease the notation, keeping in mind that 
$\varepsilon_n \le 2\, a_{n/4}(F_0, L_2)$.
We have almost sure convergence of the (abstract) 
Fourier series on $F_0$.

\begin{lemma}\label{lem:almost-sure}
There is a measurable subset $D_0$ of $D$
with $\mu(D\setminus D_0)=0$ such that
for all $x\in D_0$ and $f\in F_0$ we have 
\[
 f(x) \,=\, \sum_{k\in \IN} \hat f(k)\, b_k (x),
 \qquad \text{where} \quad \hat f(k) := \scalar{f}{b_k}_{L_2}.
\]
\end{lemma}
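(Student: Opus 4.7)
The plan is to combine the $L_2$-approximation bound from Lemma~\ref{lem:basis} with the hypothesis $\alpha>1/2$ to obtain square-summability of the projection errors, then extract pointwise a.e.\ convergence of the Fourier partial sums for each individual $f\in F_0$ via a Fubini--Tonelli argument, and finally intersect countably many full-measure sets. The starting observation is that the decay $a_n \le c\, n^{-\alpha} \log^\beta(n+1)$ with $\alpha>1/2$ yields $\sum_n a_n^2 < \infty$, and hence Lemma~\ref{lem:basis} gives
\[
\sum_{n=1}^\infty \|f - P_n f\|_{L_2}^2 \,\le\, 4\sum_{n=1}^\infty a_{\lfloor n/4\rfloor}( F_0,L_2)^2 \,<\, \infty
\]
for every $f\in F_0$ (the factor 4 coming from $\|f-P_nf\|_{L_2}\le 2a_{n/4}$).

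Next, fix $f\in F_0$ and choose once and for all measurable representatives of each $b_k$; this makes $P_n f(x) = \sum_{k=1}^n \hat f(k) b_k(x)$ a definite measurable function on $D$. Since the integrand below is non-negative, Tonelli's theorem gives
\[
\int_D \sum_{n=1}^\infty |f(x)-P_n f(x)|^2 \,d\mu(x) \,=\, \sum_{n=1}^\infty \|f-P_nf\|_{L_2}^2 \,<\, \infty,
\]
so the inner sum is finite for $\mu$-a.e.\ $x$. In particular its terms tend to zero, i.e.\ $P_n f(x) \to f(x)$ for $\mu$-a.e.\ $x$. Let $D_0(f)$ be the measurable full-measure set on which this convergence holds.

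Finally, since $F_0$ is countable, $D_0 := \bigcap_{f\in F_0} D_0(f)$ is measurable and satisfies $\mu(D\setminus D_0) \le \sum_{f\in F_0} \mu(D\setminus D_0(f)) = 0$. By construction, $f(x)=\sum_{k\in\IN} \hat f(k)\, b_k(x)$ holds for every $x\in D_0$ and every $f\in F_0$, which is the claim. The only conceptually subtle point is upgrading $L_2$-convergence (which a priori yields only subsequential a.e.\ convergence) to full a.e.\ convergence of the partial sums; this is exactly what square-summability of the $L_2$-errors buys via the Tonelli trick, and the hypothesis $\alpha>1/2$ is precisely what makes those errors square-summable. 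The remainder — measurability of $D_0(f)$ and the union bound over $F_0$ — is routine bookkeeping made possible by the countability of $F_0$.
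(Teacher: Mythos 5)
Your proof is correct, and it takes a genuinely different route from the paper's. The paper fixes $\delta\in(1/2,\alpha)$, shows the weighted bound $\sum_k|\hat f(k)|^2 k^{2\delta}\lesssim 1$ uniformly over $f\in F_0$ (by summation by parts from the projection error estimate), and then applies Cauchy--Schwarz with the weight $k^{-\delta}\cdot k^{\delta}$ to conclude that $\sum_k|\hat f(k)\,b_k(x)|$ is dominated by $C\sum_k k^{-2\delta}|b_k(x)|^2$, which is integrable in $x$. This yields \emph{absolute} a.e.\ convergence of the Fourier series, on an explicit set $D_0$ (those $x$ where $\sum_k k^{-2\delta}|b_k(x)|^2<\infty$) that is the same for every $f$. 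You instead use the square-summability of the projection errors $\sum_n\|f-P_nf\|_{L_2}^2<\infty$ (again a consequence of $\alpha>1/2$ via Lemma~\ref{lem:basis}), together with Tonelli, to get $\sum_n|f(x)-P_nf(x)|^2<\infty$ for $\mu$-a.e.\ $x$, and hence $P_nf(x)\to f(x)$ a.e.; then you intersect the countably many full-measure sets. Your argument is shorter and more elementary, and it yields convergence of the partial sums but not absolute convergence of the series; the paper's argument yields the stronger absolute convergence and a description of $D_0$ that is independent of $f$. For the lemma as stated and for its subsequent use in the proof of Fact~2 (where the block series $\sum_\ell\Gamma_\ell\hat f_\ell$ is separately shown to be absolutely norm-convergent, so the block partial sums form a convergent subsequence of the ordinary partial sums), your conclusion is enough. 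Two small points of bookkeeping, neither fatal: for $n<4$ the index $\lfloor n/4\rfloor=0$ is not in the domain of $a_\cdot$, so one should read $a_{n/4}$ as $a_{\lceil n/4\rceil}$ (or just discard finitely many terms), and the identity $P_nf(x)=\sum_{k\le n}\hat f(k)b_k(x)$ is being used as a definition of the representative of $P_nf$, as you note.
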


\begin{proof}
For all $f\in F_0$, we have
\begin{equation*}
 \sum_{k\in \IN} |\hat f(k)|^2\, k
 \,=\, \sum_{n\in \IN_0} \sum_{k>n} |\hat f(k)|^2
 \,\le\, \sum_{n\in \IN_0} \varepsilon_n^2 < \infty.
\end{equation*}
The Rademacher-Menchov Theorem, see e.g.~\cite{S41}, now implies that 
the Fourier series of $f$ converges to $f$ almost everywhere.
Since $F_0$ is countable, the almost everywhere convergence holds
simultaneously for all $f\in F_0$.
\end{proof}


\begin{rem}
Note that the Rademacher-Menchov Theorem holds under much weaker assumptions. 
However, this is not needed here, because we anyhow require at least the
square-summability of the $\eps_n$.
\end{rem}

The proof of our bound on the sampling widths is based on 
an error bound for a specific algorithm. 
This is a \emph{weighted least squares estimator}
of the form 
\begin{equation}\label{eq:alg}
A_{m,n}(f) \,=\, 
\underset{g\in V_n}{\rm argmin}\, \sum_{i=1}^m \frac{\vert g(x_i) - f(x_i) \vert^2}{\varrho(x_i)}, 
\end{equation}
for some $x_1,\dots,x_m\in D$ and $m\ge n$. 
We give an explicit formula 
for the weight function $\rho\colon D\to \IR$ later.
The points $x_1,\dots,x_m$ will be
obtained via a probabilistic argument.
They will satisfy $\rho(x_i)>0$.
The algorithm 
$A_{m,n}$ 
may be written as 
\[
 A_{m,n}\colon \wtF\to L_2, \qquad A_{m,n}(f):=\sum_{k=1}^n (G^+ N f)_k\, b_k
\]
where $N\colon F_0 \to \IR^m$ with 
$N(f):=\left(\varrho(x_i)^{-1/2}f(x_i)\right)_{i\leq m}$ 
is the \emph{weighted information mapping} and
$G^+\in \IR^{n\times m}$ is the Moore-Penrose inverse of the matrix 
\begin{equation}\label{eq:G}
 G := \left(\varrho(x_i)^{-1/2} b_k(x_i)\right)_{i\leq m, k\leq n} 
\in \IR^{m\times n}.
\end{equation}
This description of $A_{m,n}$ is actually more precise 
since it also specifies $A_{m,n}(f)$ in the case that the argmin
in~\eqref{eq:alg} is not unique (which is equivalent to $G$ not
having full rank).
For the state of the art 
on (weighted) least squares methods for the approximation 
of individual functions, or in a randomized setting, 
we refer to~\cite{CDL13,CM17} and references therein.
Here, we consider such methods in the 
\emph{worst-case setting}, i.e., we measure the error via
\[
e(A_{m,n},F_0,L_2) \,:=\, 
\sup_{f\in F_0}\,  \big\|f - A_{m,n}(f)\big\|_{L_2}.
\]
Clearly, we have $e_m(F_0,L_2)\leq e(A_{m,n},F_0,L_2)$ for 
every choice of 
$x_1,\dots,x_m$ 
and $\rho$.

\medskip

The proof of our upper bound uses the following 
simple lemma, see~\cite{KU19}, 
which we prove for the reader's convenience.
Note that our systematic study of the ``power of random information'' 
was initiated in~\cite{HKNPU19a}, see also~\cite{HKNPU19b}, 
and therefore some of the basic ideas behind, 
like a version of the following lemma, 
already appeared there.

\begin{lemma}\label{lem:bound}
Assume that $G$ has full rank, then 
\[
e(A_{m,n}, F_0, L_2)^2 \,\le\, 
\varepsilon_n^2
+ s_{\rm min}(G \colon \ell_2^n \to \ell_2^m)^{-2}
	\, \sup_{f\in \wtF} \norm{N(f- P_n f)}_{\ell_2^m}^2,
\]
where $s_{\rm min}(G \colon \ell_2^n \to \ell_2^m)$ is
the smallest
singular value of the matrix $G$.
\end{lemma}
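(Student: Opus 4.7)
The plan is to decompose the error orthogonally into its $V_n^\perp$- and $V_n$-parts, bound each separately, and add the squared norms via the Pythagorean theorem. Writing
\[
f - A_{m,n}(f) \,=\, (f - P_n f) \,+\, (P_n f - A_{m,n}(f)),
\]
the first summand lies in $V_n^\perp$ (as the residual of the $L_2$-orthogonal projection), while the second lies in $V_n$ (since both $P_n f$ and $A_{m,n}(f)$ do). Hence
\[
\|f - A_{m,n}(f)\|_{L_2}^2 \,=\, \|f - P_n f\|_{L_2}^2 \,+\, \|P_n f - A_{m,n}(f)\|_{L_2}^2,
\]
and Lemma~\ref{lem:basis} bounds the first summand uniformly on $F_0$ by $4\, a_{n/4}(F_0,L_2)^2$.

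The next step, and the place where the full-rank assumption on $G$ enters, is the observation that $A_{m,n}$ acts as the identity on $V_n$. Indeed, $A_{m,n}$ is linear, and if $g = \sum_{k=1}^n c_k b_k \in V_n$, then by the definition of $N$ and of the matrix $G$ in~\eqref{eq:G} we have $N(g) = Gc$, whence
\[
A_{m,n}(g) \,=\, \sum_{k=1}^n (G^+ G c)_k\, b_k \,=\, g,
\]
since $G^+G = I_n$ whenever $G$ has full column rank. Applying this with $g = P_n f$ and using linearity, I rewrite
\[
P_n f - A_{m,n}(f) \,=\, A_{m,n}(P_n f - f) \,=\, -\sum_{k=1}^n \bigl(G^+ N(f - P_n f)\bigr)_k\, b_k.
\]
As $\{b_k\}$ is orthonormal in $L_2$, the $L_2$-norm of this element coincides with the $\ell_2^n$-norm of its coefficient vector, yielding
\[
\|P_n f - A_{m,n}(f)\|_{L_2} \,=\, \|G^+ N(f - P_n f)\|_{\ell_2^n}.
\]

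Finally, since $G$ has full column rank, the SVD of $G$ gives $\|G^+\|_{\ell_2^m \to \ell_2^n} = s_{\min}(G)^{-1}$, so
\[
\|P_n f - A_{m,n}(f)\|_{L_2}^2 \,\le\, s_{\min}(G)^{-2}\, \|N(f - P_n f)\|_{\ell_2^m}^2.
\]
Combining the two estimates and taking the supremum over $f \in F_0$ in each summand separately yields the claim. The argument is essentially algebraic; the only step that requires a moment of thought is the reproduction identity $A_{m,n}|_{V_n} = \mathrm{id}_{V_n}$, which pins down why the full-rank hypothesis is needed and converts the analytic question into the matrix question $\|G^+ y\|_{\ell_2^n} \le s_{\min}(G)^{-1}\|y\|_{\ell_2^m}$. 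There is no measurability subtlety: since the representers of the $b_k$ were fixed once and for all in Lemma~\ref{lem:basis}, the partial sum $P_n f = \sum_{k=1}^n \hat f(k)\, b_k$ has an unambiguous pointwise value, so $N(f - P_n f) \in \IC^m$ is a well-defined vector for every $f \in F_0$.
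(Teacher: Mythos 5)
Your proof is correct and follows essentially the same route as the paper: orthogonal decomposition via $P_n$, Pythagoras, the reproduction identity $A_{m,n}|_{V_n}=\mathrm{id}$ from full rank (via $G^+G=I_n$), and the bound $\|G^+\|=s_{\min}(G)^{-1}$. The extra remarks you include (on the sign $A_{m,n}(P_nf-f)=-A_{m,n}(f-P_nf)$ and on why $N(f-P_nf)$ is well defined pointwise) are accurate elaborations of steps the paper states more tersely.
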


\begin{proof}
Since $G$ has full rank we obtain
from \eqref{eq:alg} 
that $A_{m,n}$ satisfies $A_{m,n}(f)=f$ for all $f\in V_n$. 
Using Lemma~\ref{lem:basis}, 
we obtain for any $f\in \wtF$ that
\begin{align*}\allowdisplaybreaks
 \norm{f-A_{m,n}(f)}_{L_2}^2 \,&=\,  \norm{f-P_n(f)}_{L_2}^2 + \norm{P_n  f - A_{m,n}(f)}_{L_2}^2 \\
 \,&\le\, \varepsilon_n^2 + \norm{A_{m,n}(f- P_n f)}_{L_2}^2 
 \,=\, \varepsilon_n^2 + \norm{G^+ N(f- P_n f)}_{\ell_2^n}^2 \\
 &\le\, \varepsilon_n^2 +\norm{G^+\colon \ell_2^m \to \ell_2^n}^2 
\cdot \sup_{f\in \wtF} \norm{N(f- P_n f)}_{\ell_2^m}^2.
\end{align*}
It only remains to note that the norm of $G^+$ is the 
inverse of the 
smallest singular value of the matrix $G$.
\end{proof}

We note that
\[
 \varepsilon_n^2 = (\varepsilon_n^p)^{2/p} 
 \le \Big( \frac2n \sum_{k\ge n/2} \varepsilon_k^p \Big)^{2/p},
\]
due to the monotonicity of $(\varepsilon_n)$.
The main result of this paper therefore follows once we prove
\[
s_{\rm min}(G \colon \ell_2^n \to \ell_2^m)^2 \,\gtrsim\, m 
\]
and
\[
\sup_{f\in \wtF} \norm{N(f- P_n f)}_{\ell_2^m}^2 
\,\lesssim\, n\, \log n \; \Big(\frac1n \sum_{k\ge n/2} \varepsilon_k^p\Big)^{2/p}
\]
for an instance of $A_{m,n}$ with $n\le m\le C n$.
Note that the first bound implies that $G$ has full rank.
We divide the proof of this into two parts:
\begin{enumerate}
\item We show these bounds with high probability 
			for $m\asymp n \log n$ i.i.d.~random points 
			and then, 
\item based on the famous solution to the Kadison-Singer problem,
			we extract $m\asymp n$ points that fulfill the same bounds.
\end{enumerate}

\subsection{Random points} 

We first observe the result of this paper holds with high probability 
if we allow a logarithmic oversampling. 
For this, we 
now introduce the sampling density $\rho\colon D\to \IR$, 
which will also specify the weights in the algorithm. 
For  $I_\ell:=\{n 2^\ell+1,\dots,n2^{\ell+1}\}$ 
and 
some monotonically decreasing sequence
$(v_\ell)_{\ell\in \IN_0}$ with 
$\sum_{\ell\ge 0} v_\ell^2 = 1$
(to be specified later) 
we define
\[
\varrho(x) := \frac12 \left(
 \frac1n \sum_{k\le n} |b_k(x)|^2  
+  \sum_{\ell\ge0}\, \frac{v_\ell^2}{|I_\ell|} \sum_{k\in I_\ell} |b_k(x)|^2
 \right).
\]
Observe that $\varrho$ is indeed a $\mu$-density. 
Under the assumption $(a_n)\in\ell_p$ with $p<2$
that is considered in Theorem~\ref{thm:main},
we may just use 
$v_\ell\asymp 2^{-\delta\ell}$ for some $0<\delta <1/p-1/2$. 
If we want to get closer to the condition $(a_n)\in\ell_2$,
we need to 
consider sequences $(v_\ell)$ with polynomial decay, see Section~\ref{sec:limit-case}.

\begin{rem}\label{rem:density}
The form of the density $\varrho$ is very much inspired 
by the density invented in~\cite{KU19}, 
which was already applied in
\cite{KUV19,
NSU20,U20}. 
The density in these papers was needed 
to prove the result for Hilbert spaces in greatest generality, 
i.e., for all
sequences 
$(a_n)\in\ell_2$.
The density used here is different.
It is not clear to us whether one can use the
density from~\cite{KU19}
for arbitrary classes $F$ and prove a result like
Theorem~\ref{thm:main} for all $(a_n)\in\ell_2$.
Presently, we do not know what happens, e.g., in the case that
$a_n\asymp n^{-1/2}\log^{-3/2}(n+1)$, 
see Section~\ref{sec:limit-case}.
\end{rem}

As the result of this part of the proof might be 
of independent interest, we formulate it as a theorem.

\begin{thm}\label{thm:main3}
Let $(D,\mathcal A,\mu)$ be a measure space and let 
$ F_0$ be a countable
set of functions in
$L_2(D,\mathcal A,\mu)$.
Assume that
 $(a_n(F_0, L_2))\in\ell_p$ for some $0<p<2$.
 Then there is a universal constant $C_1>0$
 and a constant $c_p>0$, depending only on $p$, such that
 for all $n\ge 2$ the algorithm
$A_{m,n}$ from \eqref{eq:alg} with 
$m= \left\lceil C_1\,n \log n\right\rceil$
and i.i.d.~random variables 
$x_1,\dots,x_m$ with $\mu$-density $\rho$, 
satisfies
 \[
 e(A_{m,n}, F_0, L_2)  \,\le\, c_p \,\left(\frac1n \sum_{k\ge n/8} a_k(F, L_2)^p \right)^{1/p}
 \]
 with probability at least $1-\frac{5}{n^2}$.
\end{thm}

The proof of this result follows a similar reasoning
as the original proof in \cite{KU19}, 
with the improvements from~\cite{U20} 
that show the result from \cite{KU19} with high probability.
The crucial difference is that we show an upper bound on
the ``norm'' of the information mapping $N$ on the set
\[
F_1 \,:=\, \set{ \sum_{k>n} c_k b_k \,\big|\, \sum_{k>m} c_k^2 \le \varepsilon_m^2 \text{ for all } m \ge n },
\]
related to full approximation spaces,
instead of the smaller set
\[
 F_2 \,:=\, \set{ \sum_{k>n} c_k b_k \,\big|\, \sum_{k>n} \brackets{\frac{c_k}{\varepsilon_k}}^2 \le 1  },
\]
related to Hilbert spaces and considered in \cite{KU19,U20}.
We do this with the help of a dyadic decomposition of the 
index set $\{k\in\IN \mid k>n\}$,
together with suitable bounds on the norms of the 
corresponding random matrices. 
For this, we use again the matrix concentration result 
from~\cite{O10}, see also~\cite{MP06}.  

\begin{prop}[{\cite[Lemma 1]{O10}}]\label{prop:O}
Let $X$ be a random vector in $\IC^k$ with  
$\|X\|_2\le R$ with probability 1, and let $X_1,X_2,\dots$ 
be independent copies of $X$. 
Additionally, let $E:=\IE(X X^*)$ satisfy $\|E\|\le 1$, 
where $\norm{E}$ denotes the spectral norm of~$E$. 
Then, 
\[
\IP\left(\bigg\|\sum_{i=1}^m X_i X_i^* - mE\bigg\|
	\,\ge\, m\cdot t\right)
\,\le\, 4m^2\exp\left(-\frac{m}{16R^2} s_t\right),
\]
where $s_t=t^2$ for $t\le2$, and $s_t=4(t-1)$ for $t>2$.
\end{prop}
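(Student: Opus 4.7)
The plan is to apply a matrix Bernstein / Chernoff argument to the centered sum $W_m := \sum_{i=1}^m (X_i X_i^* - E)$, which is a sum of $m$ i.i.d., mean-zero, self-adjoint random matrices. From the hypotheses one extracts the deterministic bound $\|X_i X_i^* - E\| \le R^2 + \|E\| \le R^2 + 1$ (the extra additive constant can be absorbed into the $16R^2$ in the denominator), together with the matrix variance estimate $\IE[(XX^* - E)^2] \preceq \IE[\|X\|^2\, XX^*] \preceq R^2 E$, whose operator norm is thus at most $R^2$ since $\|E\|\le 1$.

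First I would reduce the spectral-norm event to two one-sided events via $\|W_m\| = \max(\lambda_{\max}(W_m), \lambda_{\max}(-W_m))$ and a union bound. On each side, an exponential Markov inequality combined with Lieb's concavity theorem in the Tropp / Ahlswede--Winter style gives, for every $\theta > 0$,
\[
\IP(\lambda_{\max}(W_m) \ge mt) \,\le\, e^{-\theta m t} \, \mathrm{tr}\, \exp\bigl( m \log \IE\, e^{\theta (X X^* - E)} \bigr).
\]
A Taylor expansion of the matrix logarithm, using $\|XX^* - E\| \le 2 R^2$ together with the variance bound, yields $\log \IE\, e^{\theta (XX^* - E)} \preceq \theta^2 R^2\, \psi(\theta R^2) \cdot I$ for an explicit Bernstein-type function $\psi$. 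Optimizing $\theta$ produces the two regimes of $s_t$ — the sub-Gaussian $s_t = t^2$ for $t \le 2$ (choosing $\theta \asymp t/R^2$) and the sub-exponential $s_t = 4(t-1)$ for $t > 2$ (choosing $\theta$ at the boundary of the convergence of the series) — both absorbed into $\exp(-m s_t / (16 R^2))$.

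The main obstacle is obtaining the prefactor $4m^2$ rather than the ambient dimension factor that the Ahlswede--Winter trace bound $\mathrm{tr}\, A \le k\,\|A\|$ would naturally produce. In the target application the matrices $X_iX_i^*$ effectively act on $\ell_2^n$ with $n$ arbitrarily large (and the argument should depend only on $m$), so a dimension-dependent prefactor is unacceptable. Oliveira's device is to bypass the trace bound: rather than controlling the MGF directly, one controls high even moments $\IE\,\|W_m\|^{2q}$ through a noncommutative Rosenthal / Khintchine inequality, in which the would-be dimension factor is replaced by a combinatorial contribution from pairs of indices among the $m$ summands. Choosing $q$ of logarithmic order in $m$ and converting the moment bound back to a tail estimate via Markov's inequality produces a prefactor of order $m^2$, the constant $4$ tracking the two-sided reduction and the Rosenthal constant. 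This dimension-free passage from MGF to moments is the technical heart of the proof.
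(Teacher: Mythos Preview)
The paper does not prove this proposition; it is quoted from Oliveira~\cite{O10} (with a small correction to the function $s_t$ noted immediately after the statement), so there is no in-paper proof to compare your proposal against.

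On the substance of your sketch: the general architecture --- exponential Markov inequality, control of a matrix MGF, and a Bernstein-type optimization producing the two regimes of $s_t$ --- is sound, and you correctly single out the real difficulty, namely getting a prefactor that depends only on the number $m$ of summands rather than on the ambient dimension $k$. However, the mechanism you attribute to Oliveira is not his. A stated purpose of~\cite{O10} is precisely to give a proof of Rudelson's inequality that \emph{avoids} the noncommutative Khintchine/Rosenthal machinery; Oliveira works instead via the Ahlswede--Winter route (Golden--Thompson inequality) and exploits the rank-one structure of the summands $X_iX_i^*$ directly, which is what ties the effective dimension in the trace bound to $m$ rather than to $k$. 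The high-moment route you outline is closer to Rudelson's original argument and could plausibly be pushed through to a comparable dimension-free estimate, so it is a legitimate alternative strategy --- but it is not an account of what is in~\cite{O10}, and you would still need to check that it reproduces the specific prefactor $4m^2$ and the stated form of $s_t$. (A smaller quibble: ``Lieb's concavity in the Tropp/Ahlswede--Winter style'' conflates two distinct approaches --- Tropp's argument uses Lieb's theorem, while Ahlswede--Winter uses Golden--Thompson; Oliveira is in the latter camp.)
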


Note that \cite[Lemma 1]{O10} wrongly states 
$s_t=\min\{t^2,4t-4\}$, which can easily be corrected 
by looking into the proof.
Let us now prove the norm bounds that we need.
Namely, we prove the existence of constants $C_1$ and
$C_2=C_2(p)$
such that the following holds for $m=\left\lceil C_1\,n\log n\right\rceil$
and i.i.d.\ points $x_1,\hdots,x_m$ with density~$\varrho$.
\medskip

{\bf Fact 1:} \quad 
$\displaystyle \IP\left[\,
s_{\rm min}(G \colon \ell_2^n \to \ell_2^m)^2 \,<\, \frac{m}{2}
\,\right]
\,\le\, \frac{4}{n^2}$.

\smallskip

{\bf Fact 2:} \quad 
$\displaystyle \IP\left[\,\sup\limits_{f\in \wtF} \norm{N(f- P_n f)}_{\ell_2^m}^2 
\,>\, C_2\, n \log n\, \bigg(\frac1n \sum_{k \ge n/2} \varepsilon_k^p  \bigg)^{2/p}
 \right] \,\le\,  \frac{1}{n^2}$.

\medskip

Together with Lemma~\ref{lem:bound}
and $\varepsilon_k \le 2a_{n/4}$
these bounds clearly imply Theorem~\ref{thm:main3}.

\begin{proof}[Proof of Fact~1]
Let 
$X_i:=\varrho(x_i)^{-1/2}(b_1(x_i), \hdots, b_n(x_i))^\top$.
Then we have $\sum_{i=1}^m X_i X_i^* = G^*G$ with $G$ from \eqref{eq:G}.
First observe
\begin{equation}\label{eq:norm-G}
\norm{X_i}_2^2 \,=\, \varrho(x_i)^{-1} \sum_{k\le n} b_k(x_i)^2 
 \,\leq\, 2 n \,=:\, R^2.
\end{equation}
Since $E=\IE(X X^*)=\diag(1, \hdots,1)$ 
we have $\|E\|=1$. 
Therefore, Proposition~\ref{prop:O} with $t=\frac12$ 
and 
$m=\left\lceil C_1 n\log n\right\rceil$
implies
\begin{equation}\label{eq:bound-G}
 \IP\Big(\norm{G^*G - mE} \ge \frac{m}{2}\Big)
 \le \frac{4}{n^2}
\end{equation}
if the constant $C_1>0$ is large enough.
We obtain 
\[
s_{\rm min}(G \colon \ell_2^n \to \ell_2^m)^2
\,=\, s_{\rm min}(G^*G) \,\geq\, s_{\rm min}(mE) - \|G^*G - mE\|
 \,\geq\, m/2
\]
with probability at least $1-\frac4{n^2}$.
\end{proof}

\begin{proof}[Proof of Fact~2]
Note that we almost surely have for all $i=1,\hdots,m$ 
that $\varrho(x_i)$
is positive and finite and $x_i$ is
contained in the set $D_0$ from Lemma~\ref{lem:almost-sure}
such that we have
$f(x_i) \,=\, \sum_{k=1}^\infty \hat{f}(k) b_k(x_i)$
for every $f\in\wtF$.
In this certain event,
each entry of $N(f-P_nf)\in\IR^m$ 
can be written as 
\[
\varrho(x_i)^{-1/2}\, (f-P_nf)(x_i)
\,=\, \sum_{k>n} \hat{f}(k)\, \varrho(x_i)^{-1/2}\, b_k(x_i).
\]
If we now define $I_\ell:=\{n2^\ell+1,\dots,n2^{\ell+1}\}$ and 
the random matrices 
\[
\Gamma_\ell := 
\left(\varrho(x_i)^{-1/2} b_k(x_i)\right)_{i\leq m, k\in I_\ell} 
\in \IR^{m\times n2^\ell}, 
\]
and set $\hat{f}_\ell:=(\hat{f}(k))_{k\in I_\ell}$,
we obtain that 
\begin{multline*}
\norm{N(f- P_n f)}_{\ell_2^m}
\,=\, \norm{\sum_{\ell=0}^\infty \Gamma_\ell \hat{f}_\ell}_{\ell_2^m} \\ 
\,\le\, \sum_{\ell=0}^\infty
\norm{\Gamma_\ell\colon \ell_2(I_\ell)\to\ell_2^m}   \|\hat{f}_\ell\|_{\ell_2(I_\ell)}
\,\le\, \sum_{\ell=0}^\infty 
\norm{\Gamma_\ell\colon \ell_2(I_\ell)\to\ell_2^m}  \varepsilon_{n2^\ell}.
\end{multline*}
It remains to bound the norms of $\Gamma_\ell$ with 
high probability, 
simultaneously for all~$\ell$. 

Let us start with an individual $\ell$, 
and consider 
$X_i:=\varrho(x_i)^{-1/2}(b_k(x_i))_{k\in I_\ell}^\top$
with $x_i$ distributed according to $\varrho$. 
Hence, $\sum_{i=1}^m X_i X_i^* = \Gamma_\ell^*\Gamma_\ell$. 
We see that 
\[\begin{split}
\norm{X_i}_2^2 \,&=\, \varrho(x_i)^{-1} \sum_{k\in I_\ell} |b_k(x_i)|^2
\,\le\, \frac{n2^{\ell+1} \sum_{k\in I_\ell} |b_k(x_i)|^2}{v_\ell^2 \sum_{k\in I_\ell} |b_k(x_i)|^2}\\
\,&\le\, n2^{\ell+1} 
v_\ell^{-2}
\,=:\, R^2, 
\end{split}\]
where we just use the definition of $\rho$. 
Moreover, $E=\IE(X X^*)=\diag(1, \hdots,1)$ and so  $\|E\|=1$. 
Therefore, Proposition~\ref{prop:O} with 
$m=\left\lceil C_1 n\log n\right\rceil$ as above and
\[
t\,=\, 1+ C_3 \frac{2^\ell \log\bigl((\ell+1)n\bigr)}{v_\ell^2\,\log n}\,
\,\ge\, 2,
\] 
together with 
\[
\norm{\Gamma_\ell}^2 
\,:=\, \norm{\Gamma_\ell\colon \ell_2(I_\ell)\to\ell_2^m}^2 
\,=\, \norm{\Gamma_\ell^*\Gamma_\ell}
\,\le\, m + \norm{\Gamma_\ell^*\Gamma_\ell-mE},
\]
implies
\[
\IP\left(\norm{\Gamma_\ell}^2
\,\ge\, \frac{C_4\,n2^\ell\, \log\bigl((\ell+1)n\bigr)}{v_\ell^2}\right) 
\,\le\, \frac{4}{n^2(\ell+1)^2 \pi^2}
\]
for some constants $C_3,C_4>0$ 
(depending only on $C_1$).
Note that these probabilities are summable, and so 
a union bound shows 
\[
\IP\left(\exists\ell\in\IN_0\colon 
\norm{\Gamma_\ell}^2
\,\ge\, \frac{C_4\,n2^\ell\, \log\bigl((\ell+1)n\bigr)}{v_\ell^2} \right) 
\,\le\, \frac{1}{n^2}.
\]
We therefore obtain, with probability at least $1-\frac1{n^2}$
that
\begin{equation}\label{eq:N}
\begin{split}
\norm{N(f- P_n f)}_{\ell_2^m}
\,&\le\, C_4\, \sum_{\ell=0}^\infty
\sqrt{n2^\ell\,\log\left((\ell+1)n\right)}\, 
	\cdot \frac{\varepsilon_{n2^\ell}}{v_\ell} \\
\,&\le\, C_5\, \sqrt{\log n}\; \sum_{\ell=0}^\infty
\sqrt{n2^\ell\,\log\left(\ell+1\right)}\, 
	\cdot \frac{\varepsilon_{n2^\ell}}{v_\ell}.
\end{split}
\end{equation}
Clearly, the monotonicity of $(\varepsilon_n)$ gives
\[
 \sum_{k \ge n/2} \varepsilon_k^p \,\ge\, n (2^{\ell} -1/2) 
\varepsilon^p_{n2^\ell}
\]
and thus
\[
\varepsilon_{n2^\ell} 
\,\le\, C_6\, 2^{-\ell/p} \bigg( \frac1n \sum_{k \ge n/2} \varepsilon_k^p  \bigg)^{1/p}
\]
for all $\ell\ge 0$.
Inserting this in \eqref{eq:N} and using 
$v_\ell:=c_\delta\, 2^{-\delta\ell}$ 
for some $0<\delta <1/p-1/2$, 
with $c_\delta>0$ such that 
$\sum_{\ell\ge 0} v_\ell^2 = 1$,
yields
\[
 \norm{N(f- P_n f)}_{\ell_2^m}
\,\le\, C_7\, \sqrt{n\log n}\, \bigg( \frac1n \sum_{k \ge n/2} \varepsilon_k^p  \bigg)^{1/p}\sum_{\ell=0}^\infty
\sqrt{\log(\ell+1)}\, 2^{-(1/p-1/2-\delta)\ell}.
\]
Clearly, the latter series is finite. 
\end{proof}

\subsection{Kadison-Singer to reduce the number of points} 

We now employ the powerful solution to the Kadison-Singer 
problem due to Marcus, Spielman and Srivastava~\cite{MSS15} 
to show that we can reduce the number of points in our algorithm 
to $m\asymp n$, without losing the error bound. 
In detail, we need an equivalent version of the KS problem 
due to~\cite{We04}, which was already brought into a form that is very useful 
for us in~\cite{NSU20}. 
Note that the authors of \cite{NSU20} seem to be the first to 
use this approach in the context of sampling widths.
By this, they improved upon~\cite{KU19} and proved 
the result of Theorem~\ref{thm:main} 
for $F$ being the unit ball of a separable 
reproducing kernel Hilbert space. 
For applications of KS to the discretization of the $L_2$-norm, 
which was used to prove the result in~\cite{Tem20},
see e.g.~\cite{LT20,Tem17}. 
Here, we use a 
special case 
of~\cite[Theorem~2.3]{NSU20},  
see also~\cite[Lemma~2.2]{LT20} and~\cite[Lemma~2]{NOU16}.

\begin{prop}[{\cite[Theorem~2.3]{NSU20}}]
There exist constants $c_1,c_2,c_3>0$ such that, 
for all $u_1,\dots,u_m\in\IC^n$ such that 
$\|u_i\|_2^2\le \frac{2n}{m}$ 
for all $i=1,\dots,m$ and 
\[
\frac12\|w\|_2^2 \,\le\, \sum_{k=1}^m|\scalar{w}{u_i}|^2 \,\le\, \frac32\|w\|_2^2,
\qquad w \in \IC^n,
\]
there is a $J\subset\{1,\dots,m\}$ with 
$\#J\le c_1 n$ and 
\[
c_2\,\frac{n}{m}\,\|w\|_2^2 
\,\le\, \sum_{k\in J}|\scalar{w}{u_i}|^2 
\,\le\, c_3\,\frac{n}{m} \|w\|_2^2,
\qquad w \in \IC^n.
\]
\end{prop}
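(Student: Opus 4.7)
The plan is to deduce this proposition from the Marcus--Spielman--Srivastava (MSS) resolution of the Kadison--Singer conjecture, in the equivalent formulation of Weaver's $\mathrm{KS}_r$ conjecture. I proceed in three stages: a normalization, a partitioning by MSS, and an extraction of the desired subset $J$ from the partition.

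First I would normalize. Setting $A := \sum_{i=1}^m u_i u_i^*$, the hypothesis gives $\tfrac12 I \preceq A \preceq \tfrac32 I$, so $A^{-1/2}$ is well defined. Replacing $u_i$ by $\tilde u_i := A^{-1/2} u_i$ produces a tight frame, $\sum_i \tilde u_i \tilde u_i^* = I$, with $\|\tilde u_i\|_2^2 \le 4n/m$. Any conclusion for the $\tilde u_i$'s transfers back to the $u_i$'s by conjugation with $A^{1/2}$, altering the constants $c_2, c_3$ by at most the condition number of $A$ (bounded by $3$).

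Next I would apply MSS. With $r := \lfloor m/(C_0 n)\rfloor$ for a large absolute constant $C_0$, the smallness hypothesis $r \cdot \max_i \|\tilde u_i\|_2^2 \le 4/C_0$ of Weaver's $\mathrm{KS}_r$ conjecture is satisfied. This produces a partition $\{1,\dots,m\} = S_1 \sqcup \cdots \sqcup S_r$ such that each part's frame operator $M_j := \sum_{i\in S_j} \tilde u_i \tilde u_i^*$ satisfies $\|M_j\| \le C_1 n/m$, where $C_1$ depends only on $C_0$. Thus the matrices $(M_j)_j$ form a PSD decomposition of $I$ with uniformly small individual operator norms. I would then extract $J := \bigcup_{j \in T} S_j$ for an appropriate $T \subset \{1,\dots,r\}$. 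The upper bound $\sum_{j\in T} M_j \preceq c_3 (n/m) I$ is automatic if $|T|$ is bounded by a constant. The cardinality bound $|J| \le c_1 n$ follows by restricting $T$ to parts of below-average size: since $\sum_j |S_j| = m$ and $r \asymp m/n$, Markov's inequality shows at least half of the parts satisfy $|S_j| \le 2m/r = O(n)$, so $|T| = O(1)$ chosen from these yields $|J| = O(n)$.

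The main obstacle is the operator \emph{lower} bound $\sum_{j \in T} M_j \succeq c_2 (n/m) I$ with $|T|$ of constant size. A single application of MSS only controls the operator norms of the individual parts and does not exclude the possibility that a small union is singular in some direction of $\mathbb{C}^n$. To overcome this I would apply MSS a second time, at the eigenvector level: spectrally decompose each $M_j = \sum_k \lambda_{j,k} w_{j,k} w_{j,k}^*$ into rank-$1$ contributions $\sqrt{\lambda_{j,k}}\,w_{j,k}$ with squared norm $\lambda_{j,k} \le C_1 n/m$; the combined rank-$1$ system still sums to $I$ and satisfies the smallness hypothesis of Weaver's $\mathrm{KS}_r$. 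A second application produces a refined partition of the eigenvectors admitting two-sided operator bounds on each block, and a rounding step converts this finer partition into an index selection $T$ on the original blocks $S_j$, losing only constant factors in $c_2$ and $c_3$. (Alternatively, one could obtain the lower bound via the restricted-invertibility form of the MSS theorem, applied directly to $(\tilde u_i)$, and then combine with the MSS partition above to enforce the matching upper bound.)
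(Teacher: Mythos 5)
The paper does not prove this proposition; it is quoted verbatim from \cite[Theorem~2.3]{NSU20}, with pointers to \cite[Lemma~2.2]{LT20} and \cite[Lemma~2]{NOU16} for closely related statements and proofs. So the comparison here is against the argument in those references, which your proposal does not reproduce.

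You correctly identify the operator lower bound as the crux, but neither of your proposed fixes closes the gap. The fundamental obstruction is that for a partition into $r\asymp m/n$ parts, the MSS/Weaver bound $\|M_j\|\le\bigl(r^{-1/2}+\sqrt{\delta}\bigr)^2$ with $\delta\asymp n/m$ gives $\|M_j\|\le C_1\,n/m$ with $C_1$ strictly larger than $1$ (indeed $C_1\approx(\sqrt{C_0}+2)^2>C_0$ for any admissible $C_0$), so the complementation trick $M_j = I-\sum_{k\ne j}M_k \succeq\bigl(1-(r-1)\|M_k\|\bigr)I$ yields a \emph{negative} lower bound. Your ``second MSS application at the eigenvector level'' inherits the identical problem, since it again only produces one-sided operator-norm bounds with $r\gg 2$; and the restricted-invertibility theorem, as stated in MSS, selects at most $n$ \emph{linearly independent} vectors with a good lower bound but without the matching upper bound or the required cardinality $\Theta(n)$, so ``combining'' it with the partition is not a one-liner. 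The argument in \cite{NOU16} (which \cite{NSU20} adapts) instead \emph{iterates} Weaver's $\mathrm{KS}_2$: split into two parts, use $M_1+M_2=I$ together with $\|M_s\|\le\tfrac12+O(\sqrt{\delta})$ to conclude $M_s\succeq\bigl(\tfrac12-O(\sqrt{\delta})\bigr)I$ for \emph{both} parts, keep the smaller part, renormalize (the vector-norm bound doubles to $\approx 2\delta$), and repeat $\approx\log_2(m/n)$ times, stopping while the accumulated bound is still a small constant. Because the errors $\sqrt{2^i\delta}$ form a geometric series dominated by the last term, the two-sided bounds survive with multiplicative constants; picking the smaller half at each step gives $\#J\le m/2^k=O(n)$. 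This dyadic KS$_2$ iteration, rather than a single KS$_r$ application on a union of parts, is the missing idea.
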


We now let 
$u_i=\frac{1}{\sqrt{m}}\left(\varrho(x_i)^{-1/2} b_j(x_i)\right)_{j\leq n} \in\IC^n$
with random $x_1,\hdots,x_m$ as in the previous section. 
We clearly have 
\[
 \sum_{k=1}^n|\scalar{w}{u_i}|^2=\frac1{m}\|Gw\|_2^2,  \qquad w \in \IC^n.
\]
Moreover, from \eqref{eq:norm-G}, \eqref{eq:bound-G} and \eqref{eq:N}
we see that $u_1,\hdots,u_m$ satisfy with high probability
the conditions of the proposition and
\[
 \sup\limits_{f\in \wtF} \norm{N(f- P_n f)}_{\ell_2^m}\,\le\, 
				c_4\, \sqrt{n\log n}\; \bigg( \frac1n \sum_{k \ge n/8} a_k(F_0, L_2)^p  \bigg)^{1/p}
\]
for some constant $c_4>0$, depending only on $p$.
The proposition yields
$J\subset\{1,\dots,m\}$ with 
$\#J\le c_1 n$ such that the matrix
\[
G_J \,:=\, \left(\varrho(x_i)^{-1/2} b_k(x_i)\right)_{i\in J,\, k\le n}
\]
satisfies
\[
c_2 \|w\|_2^2 \,\le\, \frac1{n}\|G_Jw\|_2^2 \,\le\, c_3 \|w\|_2^2,
\]
and hence 
\[
s_{\rm min}(G_J\colon \ell_2^n \to \ell_2(J))^2 \,\ge\, c_2 n.
\]
Moreover, we clearly have
\[
\sup\limits_{f\in \wtF} \norm{N_J(f- P_n f)}_{\ell_2(J)}
\,\le\, c_4\, \sqrt{n\log n}\; \bigg( \frac1n \sum_{k \ge n/8} a_k(F_0, L_2)^p  \bigg)^{1/p}
\]
for every $J\subset\{1,\dots,m\}$, 
where $N_J(f):=(\varrho(x_i)^{-1/2}f(x_i))_{i\in J}$.
Using Lemma~\ref{lem:bound}, we see that the algorithm
\[
A_{J,n}(f) \,:=\, \sum_{k=1}^n (G_J^+ N_J f)_k b_k
\,=\, \underset{g\in V_n}{\rm argmin}\, 
\sum_{i\in J} \frac{\vert g(x_i) - f(x_i) \vert^2}{\varrho(x_i)} 
\]
satisfies 
\[\begin{split}
e(A_{J,n}, F_0, L_2)^2 \,&\le\, 
\varepsilon_n^2 + s_{\rm min}(G_J\colon \ell_2^n \to \ell_2(J))^{-2}
	\cdot \sup_{f\in \wtF} \norm{N_J(f- P_n f)}_{\ell_2(J)}^2 \\
\,&\le\, c_5\, \log n \; \bigg( \frac1n \sum_{k \ge n/8} a_k(F_0, L_2)^p  \bigg)^{2/p}
\end{split}\]
for some $c_5>0$ that only depends on $p$.
Clearly, the same upper bound holds for $e_{c_1 n}(F_0, L_2)^2$
which proves Theorem~\ref{thm:main2}, and thereby Theorem~\ref{thm:main}.

\section{The limiting case $p=2$}
\label{sec:limit-case}

With our techniques, we were not able to prove a result for arbitrary 
spaces with $(a_n)\in\ell_2$, and it is not clear if this 
is possible, even if we allow weaker bounds.
However, the condition $(a_n)\in\ell_p$ with $p<2$
may be weakened to $((\log n)^q\, a_n )\in\ell_2$
for any $q>1$.
A second look at our proof
quickly reveals that we actually showed the upper bound
\begin{equation}\label{eq:general-bound}
 e_{cn}(F, L_2) \,\le\,
 C\, \sqrt{\frac{ \log n}{n}}\; \sum_{\ell=0}^\infty
\sqrt{n2^\ell \log\left(\ell+1\right)}\, 
	\cdot \frac{a_{n2^\ell}(F, L_2)}{v_\ell}
\end{equation}
for any sequence $(v_\ell)$ with $\sum_{\ell\ge 0}  v_\ell^2 = 1$,
all $n\ge 2$,
and with universal constants $c$ and $C$.
We simply 
skip the additional estimates after equation~\eqref{eq:N}.
Theorem~\ref{thm:main} is obtained from this estimate if we choose $ v_\ell$ with exponential decay.
The following is obtained if we choose $ v_\ell$ with polynomial decay.

\begin{thm}\label{thm:limit}
 Let $(D,\mathcal A,\mu)$ be a measure space and let 
 $F$ be a separable metric space of complex-valued functions on $D$
 that is continuously embedded into $L_2(D,\mathcal A,\mu)$
 such that function evaluation is continuous on $F$. 
 Then there are universal constants $c,C\in \IN$ such that,
 for all $n\ge 3$,
 \[
 e_{cn}(F, L_2) \,\le\, 
 C\cdot \left(\frac{\log n}{n}\,
 \sum_{k\ge n} (\log k)^2 (\log \log k)^5 a_k(F, L_2)^2 \right)^{1/2}.
 \]
\end{thm}

If we apply this estimate to a sequence
satisfying $a_n(F, L_2) \lesssim n^{-1/2} \log^\beta (n+1)$
for some 
$\beta <-3/2$,
we obtain for any $\varepsilon>0$ that
\[
  e_n(F, L_2) \,\lesssim \, n^{-1/2} \log^{\beta+2+\varepsilon} (n+1).
\]

\begin{proof}
We apply Cauchy Schwarz to \eqref{eq:general-bound},
choosing the square summable sequence $(\ell^{1/2}\log(\ell+1))^{-1}$
as the first factor, and obtain
\begin{align*}
 e_{2cn}(F, L_2)^2 \,&\le\,
 \widetilde C\, \frac{\log n}{n}\, \sum_{\ell=0}^\infty 
	n2^\ell \cdot \ell\, \log^3\left(\ell+1\right)\, 
	\cdot \frac{a_{n2^{\ell+1}}(F, L_2)^2}{ v_\ell^2}  \\
	&\le\,
	\widetilde C\, \frac{\log n}{n}\, \sum_{\ell=0}^\infty 
	\frac{\ell \log^3\left(\ell+1\right)}{ v_\ell^2}  \sum_{k \in I_\ell} a_k(F, L_2)^2.
\end{align*}
Recall $I_\ell=\{ n2^\ell + 1, \hdots, n2^{\ell+1} \}$.
The statement is obtained by setting $ v_\ell^{-2} = \tilde c \ell \log^2(\ell+1)$,
noting that $\ell \le \log(k)$ for $k\in I_\ell$.
\end{proof}

\medskip

\subsection*{Acknowledgement}
We thank Aicke Hinrichs, Erich Novak, Winfried Sickel, Vladimir Temlyakov and Tino Ullrich
for various helpful comments.
In particular, we thank Erich Novak
for providing us with Example~\ref{counterexample},
and Tino Ullrich for pointing out the present (more elegant) formulation 
of the upper bound in Theorem~\ref{thm:main}
and for coming up with the example of Korobov classes.
David Krieg is supported by the Austrian Science Fund (FWF) Project F5513-N26, 
which is a part of the Special Research Program \emph{Quasi-Monte Carlo Methods:~Theory and Applications}.

\bigskip



\begin{thebibliography}{16}


\bibitem{Bab60} K.\,I.~Babenko. 
About the approximation of periodic functions of many variable trigonometric polynomials.
\emph{Dokl.\,Akad.\,Nauk SSR} 32, 247--250, 1960.


\bibitem{CDL13}
A.~Cohen, M.\,A.~Davenport, D.~Leviatan. 
On the Stability and Accuracy of Least Squares Approximations. 
\emph{Found.~Comput.~Math.} 13, 819--834, 2013. 

\bibitem{CM17} A.~Cohen, G.~Migliorati. 
\newblock Optimal weighted least-squares methods.
\newblock {\em SMAI-Journal of Computational Mathematics}, 
3:181--203, 2017.


\bibitem{DTU16}
{D.}~D\~ung, V.\,N.~Temlyakov, and T.~Ullrich.
\newblock {H}yperbolic {C}ross {A}pproximation.
\newblock {\em Advanced Courses in Mathematics - CRM Barcelona}.
  Springer International Publishing, 2018.

\bibitem{EL13}
D.\,E.~Edmunds, J.~Lang.
Gelfand numbers and width. 
\emph{J. Approx. Theory}, 166:78--84, 2013.


\bibitem{HKNPU19b}
A.~Hinrichs, D.~Krieg, E.~Novak, J.~Prochno, and M.~Ullrich.
\newblock On the power of random information.
\newblock In F.~J. Hickernell and P.~Kritzer, editors, {\em Multivariate
  Algorithms and Information-Based Complexity}, pages 43--64. De Gruyter,
  Berlin/Boston, 2020.

\bibitem{HKNPU19a}
A.~Hinrichs, D.~Krieg, E.~Novak, J.~Prochno, and M.~Ullrich.
\newblock Random sections of ellipsoids and the power of random information.
\newblock {\em arXiv:1901.06639}, 2019.

\bibitem{HNV08}
A.~Hinrichs, E.~Novak, and J.~Vyb\'iral.
\newblock Linear information versus function evaluations for $\ell_2$-approximation.
\newblock {\em J. Approx. Theory}, 153(1):97--107, 2008.

\bibitem{KUV19}
L.~K\"ammerer, T.~Ullrich, T.~Volkmer.
\newblock Worst case recovery guarantees for least squares approximation using random samples.
\newblock {\em arXiv:1911.10111}, 2019.


\bibitem{Kri18}
D.~Krieg.
\newblock Tensor power sequences and the approximation of tensor product operators.
\newblock {\em J.~Complexity}, 44:30--51, 2018.

\bibitem{KU19}
D.~Krieg,  and M.~Ullrich.
\newblock Function values are enough for $L_2$-approximation.
\newblock \emph{Found.~Comput.~Math.}, 2020. 
https://doi.org/10.1007/s10208-020-09481-w


\bibitem{LT20}
I. Limonova and V. Temlyakov. 
On sampling discretization in $L_2$. 
arXiv:2009.10789v1, 2020.

\bibitem{MSS15}
A.\,W.~Marcus, D.\,A.~Spielman, and N.~Srivastava. 
Interlacing families II: Mixed characteristic
polynomials and the Kadison-Singer problem. 
\emph{Ann. of Math.} (2), 182(1):327–350, 2015.

\bibitem{MP06}
S.~Mendelson and A.~Pajor.
\newblock On singular values of matrices with independent rows.
\newblock {\em Bernoulli}, 12(5):761--773, 2006.


\bibitem{Mit62} B.~S.~Mityagin. 
Approximation of functions in $L^p$ and $C$ on the torus.
\emph{Math.\,Notes}~58, 397--414, 1962.

\bibitem{NSU20}
N.~Nagel, M.~Sch\"afer, T.~Ullrich.
A new upper bound for sampling numbers. 
to appear in \emph{Found.~Comput.~Math.}, 2020.
arXiv:2010.00327

\bibitem{NOU16}
S.~Nitzan, A.~Olevskii, A.~Ulanovskii. 
Exponential frames on unbounded sets. 
\emph{Proc. Amer. Math. Soc.}, 144(1):109–118, 2016.

\bibitem{NW08}
E.~Novak and H.~Wo\'zniakowski.
\newblock {\em Tractability of multivariate problems. {V}ol. 1: {L}inear
  information}, volume~6 of {\em EMS Tracts in Mathematics}.
\newblock European Mathematical Society (EMS), Z\"urich, 2008.

\bibitem{NW10}
E.~Novak and H.~Wo\'zniakowski.
\newblock {\em Tractability of multivariate problems. {V}olume {II}: {S}tandard
  information for functionals}, volume~12 of {\em EMS Tracts in Mathematics}.
\newblock European Mathematical Society (EMS), Z\"urich, 2010.

\bibitem{NW12}
E.~Novak and H.~Wo\'zniakowski.
\newblock {\em Tractability of multivariate problems. {V}olume {III}:
  {S}tandard information for operators}, volume~18 of {\em EMS Tracts in
  Mathematics}.
\newblock European Mathematical Society (EMS), Z\"urich, 2012.

%
\bibitem{O10}
R. I. Oliveira. 
Sums of random Hermitian matrices and an inequality by Rudelson. 
\emph{Electr. Comm. Probab.}, 15:203–212, 2010.

\bibitem{S41}
R. Salem. 
A new proof of a theorem of Menchoff.
\emph{Duke Math. J.}, 8(2):269--272, 1941.
%
%
%

\bibitem{Tem93}
V.~N. Temlyakov.
\newblock {\em Approximation of periodic functions}, 
	Computational Mathematics and Analysis Series, 
	Nova Science Publishers, Inc., Commack, NY, 1993.
	
\bibitem{Tem17}
V.~N. Temlyakov.
The Marcinkiewicz-type discretization theorems for the hyperbolic cross polynomials. 
\newblock {\em Jaen Journal on Approximation}, 9(1):37--63, 2017.

\bibitem{Tem18}
V.~N. Temlyakov.
\newblock {\em Multivariate Approximation}, volume~32 of {\em Cambridge
  Monographs on Applied and Computational Mathematics}.
\newblock Cambridge University Press, 2018.

\bibitem{Tem20}
V.~Temlyakov. 
On optimal recovery in $L_2$. 
\emph{J.~Complexity}, 2020. https://doi.org/10.1016/j.jco.2020.101545

\bibitem{TU21a}
V.~Temlyakov, T.~Ullrich. 
Approximation of functions with small mixed smoothness in the uniform norm.
arXiv:2012.11983, 2021. 

\bibitem{TU21b}
V.~Temlyakov, T.~Ullrich.
Bounds on Kolmogorov widths and sampling recovery for classes with small mixed smoothness.
arXiv:2012.09925, 2021. 


\bibitem{U20}
M.~Ullrich. 
On the worst-case error of least squares algorithms 
for $L_2$-approximation with high probability. 
\emph{J. Complexity} 60, 2020. 

%

\bibitem{We04}
N.~Weaver. 
The Kadison-Singer problem in discrepancy theory. 
\emph{Discrete Mathematics}, 278(1--3):227--239, 2004.

\end{thebibliography}
\end{document}